\newtheorem{theorem}{Theorem}
\newtheorem{lemma}{Lemma}[section]
\newtheorem{conjecture}[lemma]{Conjecture}
\newtheorem{observation}[lemma]{Observation}
\newtheorem{definition}[lemma]{Definition}
\newtheorem{problem}[lemma]{Problem}
\newtheorem{corollary}[lemma]{Corollary}
\newtheorem{proposition}[lemma]{Proposition}
\newtheorem{remark}[lemma]{Remark}
\begin{document}

\title{Coupon-Coloring and total domination in Hamiltonian planar triangulations }

\author{
  {\bf Zolt\'an L\'or\'ant Nagy}\thanks{The author is supported by the Hungarian Research Grant (OTKA) No. K 120154 and by the J\'anos Bolyai Research Scholarship of the Hungarian Academy of Sciences}\\ \\ 
{\small MTA--ELTE Geometric and Algebraic Combinatorics Research Group}\\
{\small H--1117 Budapest, P\'azm\'any P.\ s\'et\'any 1/C, Hungary}\\
{\small \tt{  nagyzoli@cs.elte.hu}}}

\date{}

\maketitle

\begin{abstract}
We consider the so-called coupon-coloring of the vertices of a graph where
every color appears in every open neighborhood, and our aim is to determine the maximal number of colors in such colorings.
In other words, every color class must be a  total dominating set in the graph and  we study the total domatic number of the graph.  We determine this parameter in every  maximal outerplanar graph, and show that every Hamiltonian maximal planar graph has domatic number at least two, partially answering a conjecture of Goddard and Henning.

\bigskip\noindent \textbf{Keywords:} coupon-coloring, generalized sun graphs, outerplanar graph, total domatic number, total domination, triangulated disc
\end{abstract}

\section{Introduction}
\label{sec:Intro}
Let $G$ be a graph with no isolated vertices.
 Cockayne, Dawes and Hedetniemi introduced the concept of the \textit{total domatic number} of graphs in \cite{CDH}. 
We call a subset $S$ of the vertex set $V(G)$ of a graph $G$ a \textit{total dominating set}, if every vertex of $G$ is adjacent to at least one vertex from $S$. The topic has a large literature, we refer the reader to the excellent surveys of Henning and Yeo \cite{HY} and Henning \cite{H09} for details on total domination. 
The maximum number of disjoint total dominating sets is called the \textit{total domatic number}. 
 One may see that as a partition of the vertices such that every vertex has a neighbor in each partition class, and the maximal cardinality of partition classes is studied. For that reason, in the forthcoming sections we  call a coloring of a graph \textit{coupon-coloring} if every vertex has neighboring vertices in each color class, for brevity. This name was introduced by  Chen, Kim, Tait and Verstraete \cite{Chen}, while Goddard and Henning called such a coloring \textit{thoroughly distributed} \cite{GH16}. Hence the coupon-coloring number and the total domatic number are equivalent concepts. 

The importance of the case when the total domatic (or coupon-coloring) number equals $2$ relies on the fact that it is closely related to the so called  Property B of hypergraphs defined by Erd\H{o}s \cite{Erdos}. More precisely, the neighborhood hypergraph $\Gamma(G)$ of $G$, consisting of the neighborhoods $N(v)$ of the vertices $v\in V(G)$ as hyperedges has Property B if and only if $G$ has a coupon-coloring with two colors (i.e. the total domatic number is at least two). In addition, the concept has applications in network science, see \cite{Chen}, and also related to the panchromatic $k$-colorings of a hypergraph defined by 
Kostochka and Woodall \cite{KW} 
concerning  vertex  colorings with $k$ colors such that every
hyperedge contains each color.

We note that it is NP-complete to decide whether the total domatic number is at least two for a given graph, and there are graph with arbitrarily large minimal degree and total domatic number $1$. On the other hand, the total domatic number of $k$-regular graphs is of order $\Omega(k/\log(k))$ \cite{Chen}.

The problem of finding  the largest possible number of colors in such colorings has been investigated for several graph families. \cite{Chen, Yi}  We mainly focus on planar graphs, following the footsteps of Goddard and Henning \cite{GH16}, and also of Dorfling,  Hattingh and Jonck \cite{DHJ16}.

Our paper is built up as follows. In Section 2, we extend a theorem of Goddard and Henning \cite{GH16} and characterize those maximal outerplanar graphs that have a coupon-coloring with $2$ colors. Section 3 is devoted to the confirmation of their conjecture in a special case, namely we prove that Hamiltonian maximal planar graphs do have a coupon-coloring with $2$ colors. Finally,  we discuss some related results and open problems in Section 4.


\section{Maximal outerplanar graphs }
\label{sec:Outplan}
The total dominating sets of maximal outerplanar graphs have been studied extensively, see \cite{DHJ17, DHJ16, LZZ}.
In their paper \cite{GH16}, Goddard and Henning studied the total domatic number  for special planar graph families, such as outerplanar graphs.  We will strengthen their result 
in the forthcoming section.

First we introduce some notations and present  some useful observations concerning maximal outerplanar graphs.

\begin{definition} The \textit{weak dual} of a maximal outerplanar graph $G$ is  the graph that has a vertex for every bounded face of the embedding of $G$,
and an edge for every pair of adjacent bounded faces of $G$. The unbounded  face (or simply the boundary of the outerplanar graph) determines a Hamiltonian cycle, and edges not belonging to this cycle are called {\em{chords}}. Note that chords correspond to  edges in the weak dual.  We can assign a distance for each pair of vertices, which is their minimal ordinary distance on the Hamiltonian cycle, i.e. for a cyclically indexed maximal outerplanar graph of order $n$, distance of $v_i$ and $v_j$ is $\min\{j-i, n-j+i  \}$ if $j>i$; but for convenience, we can allow both of $\{j-i, n-j+i  \}$ to represent the distance as we will not consider the induced metric. Vertices are always indexed cyclically.
\end{definition}

\begin{observation}\label{eszrevetel} The {weak dual} of an $n$-vertex maximal outerplanar graph $G$ is tree on $n-2$ vertices with maximal degree at most three.  If an edge of the weak dual divides the tree into two subtrees of size $|T_1|$ and $|T_2|$, then the chord corresponding to this edge defines distance $|T_1|+1$, or equivalently $|T_2|+1$ in the above sense; i.e. the difference of the indices of corresponding vertices in $G$ is either  $|T_1|+1$ or $|T_2|+1$. The maximal number of degree $2$ vertices in $G$, or equivalently, the maximal number of leaves in the weak dual is at most $n/2$.
\end{observation}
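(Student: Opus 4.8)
The plan is to derive all three assertions from the single structural fact that a maximal outerplanar graph $G$ on $n$ vertices is precisely a triangulation of the convex $n$-gon bounded by its Hamiltonian cycle: the $n$ boundary edges together with $n-3$ chords cut the interior into $n-2$ triangular bounded faces. I would establish this by induction on $n$ (every maximal outerplanar graph has an ``ear'', that is, a degree-$2$ vertex whose two neighbours are joined by a chord; deleting it reduces to the case $n-1$), or simply invoke it as standard. The first assertion is then immediate: the weak dual has one vertex per bounded face, hence $n-2$ vertices, and one edge per chord, since two bounded faces are adjacent exactly when they share a chord, hence $n-3$ edges. Since the dual is connected (any two triangles are linked by a sequence of chord-crossings), a connected graph on $n-2$ vertices with $n-3$ edges is a tree; and its maximum degree is at most three because each bounded face is a triangle with three sides, each shared with at most one other face.

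For the second assertion I would fix an edge of the dual, corresponding to a chord $v_iv_j$ with $i<j$. This chord separates the $n$-gon into two sub-polygons on the vertex sets $\{v_i,\dots,v_j\}$ and $\{v_j,\dots,v_n,v_1,\dots,v_i\}$, and the triangulation of $G$ restricts to a triangulation of each; the two resulting triangle sets are exactly the two subtrees $T_1,T_2$ obtained by deleting the chosen dual edge. A triangulated $k$-gon has $k-2$ triangles, so the side carrying $|T_1|$ triangles is a $(|T_1|+2)$-gon, whence its extreme boundary vertices $v_i,v_j$ satisfy $j-i=|T_1|+1$; symmetrically $n-(j-i)=|T_2|+1$. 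These are precisely the two admissible values of the distance.

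For the third assertion the first step is the correspondence between degree-$2$ vertices of $G$ and leaves of the dual: a leaf of the dual is a triangle incident to exactly one chord, hence having two boundary edges, and the apex where those two boundary edges meet is a degree-$2$ vertex of $G$; conversely every degree-$2$ vertex arises as such an apex, and the map is a bijection. It then suffices to bound the number of leaves of the dual tree $D$. Writing $L,n_2,n_3$ for the numbers of vertices of $D$ of degree $1,2,3$, the vertex count $L+n_2+n_3=n-2$ and the handshake identity $L+2n_2+3n_3=2(n-3)$ combine to give $L=n_3+2$ and $n_2=n-4-2n_3\ge 0$; hence $n_3\le (n-4)/2$ and $L=n_3+2\le n/2$, as claimed. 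The hexagon triangulated by the central triangle $v_1v_3v_5$ shows the bound $n/2$ is attained.

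I expect the only genuinely delicate point to be the bijection in the last part, namely checking that each leaf of the dual yields exactly one degree-$2$ vertex and vice versa, together with handling the degenerate case $n=3$, where the dual degenerates to a single vertex and $G$ is a triangle. Everything else reduces to routine counting on a tree combined with the polygon-triangulation facts, so the structural setup is where the care is needed rather than in the arithmetic.
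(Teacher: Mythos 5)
Your proof is correct, and since the paper states this Observation without any proof, your argument simply supplies the standard reasoning the author leaves implicit: the polygon-triangulation count for the tree structure and the chord distances, the leaf--ear bijection, and the degree count $L=n_3+2\le n/2$ all check out. The only caveat, which you already flag, is that the leaf/degree-$2$ correspondence and the $n/2$ bound require $n\ge 4$ (for $n=3$ the triangle has three degree-$2$ vertices), consistent with the paper's standing assumption.
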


Our key constructions are the  so called \textit{sun graphs} --- introduced by Goddard and Henning ---  and  \textit{generalized sun graphs}.

\begin{definition}  \cite{GH16}
Let $G$ be any maximal outerplanar graph of order $n\geq 3$. The { \em sun graph} $M(G)$ of $G$ is the graph obtained from  $G$ in the following way. Take a new vertex $v_e$ for each edge $e=uw$ on the boundary of $G$, and join $v_e$ with $u$ and $w$. The resulting graph will be also a maximal planar graph and has order $2n$. 
\end{definition}

\begin{figure}[h]
\centering
\includegraphics[height=3cm]{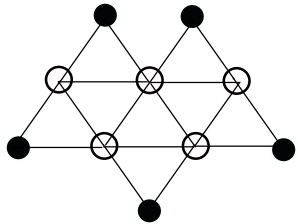}
\caption{The sun graph $M(G_5)$ of the unique maximal planar graph $G_5$ on $5$ vertices}
\label{sun}
\end{figure}

Alternatively, a sun graph is any maximal planar graph  that have half as many degree $2$ vertices as the order of the graph.

\begin{definition}\label{central}
A vertex $v$ of a maximal planar graph is called {\em central vertex} if no degree $2$ vertex is contained in its closed neighborhood, moreover if we start indexing the vertices along the Hamiltonian cycle from that vertex
 (considered as $v=v_0$),  its neighbor vertices  have congruent indices $\pmod 4$. 
\end{definition}

\begin{definition}\label{almost}
A {\em generalized sun graph} is any maximal planar graph of order $n\equiv 2 \pmod 4$ that has half as many degree $2$ vertices plus central vertices as the order of the graph.
\end{definition}

This directly implies  that every second vertex along the unbounded face is either a central vertex or has degree $2$ in a generalized sun graph. Also observe that any edge incident to a central vertex partitions the generalized sun graph to two generalized sun graph of smaller size, by the definition.

Note that the total domination number of maximal outerplanar graphs has been studied by several
authors. Dorfling, Hattingh and  Jonck \cite{DHJ16} showed that, except for two exceptions, every
maximal outerplanar graph with order at least $5$ has total domination number at most $2n/5$.  

This result implies that in general, one cannot obtain a suitable coloring of a maximal outerplanar graph with more than $2$ colors. Goddard and Henning proved the following theorem.

\begin{theorem}\label{GH}\cite{GH16}
Let $n$ be an integer, $n \geq 4$.
If $n\equiv 2 \pmod 4$, then there exists a maximal outerplanar graph of order $n$ without two disjoint total dominating sets. Otherwise (i.e., $n \not\equiv 2 \pmod 4$), every maximal outerplanar graph with order $n$ has two disjoint total dominating sets.
\end{theorem}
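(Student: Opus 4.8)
Throughout I work with the equivalent notion of a \emph{$2$-coupon-colouring}: a colouring of $V(G)$ with two colours (\emph{red} and \emph{blue}) in which every vertex has a neighbour of each colour. If $S_1,S_2$ are disjoint total dominating sets, then colouring $S_1$ red and the rest blue gives every vertex a red neighbour (as $S_1$ dominates) and a blue neighbour (as $S_2\subseteq V\setminus S_1$ dominates); conversely the two colour classes of a $2$-coupon-colouring are disjoint total dominating sets. The one local constraint I use repeatedly is that an ear $v_i$ (a degree-$2$ vertex, whose two neighbours $v_{i-1},v_{i+1}$ are joined by a chord) forces $c(v_{i-1})\neq c(v_{i+1})$, since these are its only neighbours.

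\emph{Nonexistence for $n\equiv 2\pmod 4$.} Writing $n=4k+2$ with $k\geq 1$, I build a maximal outerplanar graph whose ear constraints are contradictory. Arrange $2k+1$ ``hub'' vertices $w_0,\dots,w_{2k}$ and $2k+1$ ``ear'' vertices $e_0,\dots,e_{2k}$ alternately around the Hamiltonian cycle, as $w_0,e_0,w_1,e_1,\dots,w_{2k},e_{2k}$ (in total $4k+2=n$ vertices), insert the chords $w_iw_{i+1}$ (indices modulo $2k+1$), and triangulate the resulting inner $(2k+1)$-gon $w_0w_1\cdots w_{2k}$ arbitrarily (for $k=1$ it is already a triangle, giving the triangulated hexagon). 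Each $e_i$ is an ear with neighbours $w_i,w_{i+1}$, so any $2$-coupon-colouring would force $c(w_i)\neq c(w_{i+1})$ for every $i$, i.e. a proper $2$-colouring of the odd cycle $w_0w_1\cdots w_{2k}w_0$, which is impossible.

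\emph{Existence for $n\equiv 0,1\pmod 4$.} For $n\equiv 0\pmod 4$ I colour the Hamiltonian cycle periodically by the pattern $RRBB$; checking the four residues shows every vertex already sees both colours among its two cycle-neighbours, and the chords can only add further neighbours. For $n\equiv 1\pmod 4$ I delete an ear $v_0$, leaving a maximal outerplanar graph $G'$ of order $n-1\equiv 0\pmod 4$ in which the former chord $v_1v_{n-1}$ is now a boundary edge. Since the periodic colouring works for every phase, I choose the phase that makes this particular boundary edge bichromatic; reinstating $v_0$ then gives it a red and a blue neighbour, and harms no one else.

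\emph{The case $n\equiv 3\pmod 4$ is the crux.} Here deleting an ear $v_0$ leaves $G'$ of order $n-1\equiv 2\pmod 4$, which by the first half may admit no $2$-coupon-colouring, so I cannot simply invoke an earlier case. Examining what reinstating $v_0$ requires, with $c(v_0):=c(v_1)$ one checks that $v_0$ and $v_1$ are automatically satisfied as soon as $c(v_1)\neq c(v_{n-1})$; hence it suffices to $2$-colour $G'$ so that $c(v_1)\neq c(v_{n-1})$ and every vertex \emph{except} $v_1$ has both colours in its neighbourhood. The guiding idea is that when $v_1$ is an ear of $G'$, relaxing it deletes exactly one of the $\neq$-constraints, and removing one edge from the odd constraint cycle of a ``bad'' $\equiv 2$ graph turns it into a path, which is colourable (on the hexagon the forbidding triangle of hub-constraints becomes a path). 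The main obstacle is therefore twofold: first, to show that one can always choose the deleted ear $v_0$ of $G$ so that its neighbour $v_1$ has degree $3$ in $G$, i.e. becomes an ear of $G'$; and second, to carry out the resulting relaxed colouring of an arbitrary $\equiv 2\pmod 4$ maximal outerplanar graph, which I expect to handle via the periodic colouring together with a careful phase choice and a chord repair, and which is where essentially all of the remaining casework resides.
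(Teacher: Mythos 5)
Three of your four residue classes are handled correctly and in essentially the way the paper (which does not reprove this cited theorem, but sketches the evidence) indicates: your hub-and-ear construction for $n\equiv 2\pmod 4$ is precisely the sun graph $M(H)$ of a triangulated $(2k+1)$-gon, which is the paper's stated witness for the first half; the periodic $RRBB$ colouring for $n\equiv 0\pmod 4$ is the paper's own argument; and your ear-deletion with a phase choice for $n\equiv 1\pmod 4$ is a correct reduction to the $0\pmod 4$ case.

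The case $n\equiv 3\pmod 4$, however, is a genuine gap: you reduce it to a relaxed colouring problem but resolve neither of the two obstacles you name, and the first one is false as stated. You want to pick the deleted ear $v_0$ so that its cycle-neighbour $v_1$ has degree $3$ in $G$. Now $v_1$ has degree $3$ exactly when the bounded face sharing the chord $v_{n-1}v_1$ with the ear triangle is $v_{n-1}v_1v_2$, i.e.\ when that face has a boundary edge; if instead every leaf of the weak dual tree hangs from a node of dual degree $3$ (so all three edges of each such neighbouring face are chords), then \emph{both} cycle-neighbours of \emph{every} ear have degree at least $4$. Such dual trees of order $n-2$ with $n\equiv 3\pmod 4$ exist --- for $n=11$ take three degree-$3$ nodes, one degree-$2$ node and five leaves, each leaf attached to a degree-$3$ node --- and every tree of maximum degree $3$ is the weak dual of some maximal outerplanar graph, so no admissible $v_0$ need exist. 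Even granting that, your second obstacle (colouring an arbitrary maximal outerplanar graph of order $\equiv 2\pmod 4$ with one vertex's constraint dropped and one inequality imposed) is essentially the whole difficulty of the theorem and is only described, not carried out. The odd cases really do require the reduction machinery of \cite{GH16}, or a decomposition along a chord of length $3$ or $4$ in the spirit of Lemma \ref{3or4} and the proof of Theorem \ref{GH_new}, rather than a single ear deletion.
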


The evidence for the first part of Theorem \ref{GH} relies on the fact that for any graph $H$ of order $2k+1$, the corresponding sun graph $M(H)$ of order $4k+2$ admits no  disjoint  pair of total dominating sets, see \cite{GH16}. However, this is not the only family of graphs which does not admit a pair of disjoint total dominating sets. We define below a well structured infinite family as an example and prove that any generalized sun graph also owns this property.

\begin{definition}
A \textit{parasol graph} $\mathrm{P}_n$ with $n=4k+2$ is defined as follows: take $k$-fan on center vertex $u_c$ (i.e. $k$ copies of a triangle joined in a vertex)  on vertices $u_c$, $v_i$ $w_i$ $1\leq i\leq k$ and $k$ copies of the maximal planar graph $G_5$ where $p_iq_i$ denotes the (unique) edge in the $i$th copy which is spanned by the degree $3$ vertices. Identify the pairs $(w_i, v_{i+1})$ for all $1\leq i\leq k-1$ and also the pairs $(v_i, p_i)$, $(w_i, q_i)$ for all  $1\leq i\leq k$. (See figure \ref{parasol}).
\end{definition}

\begin{figure}[h]
\centering
\includegraphics[width=5cm]{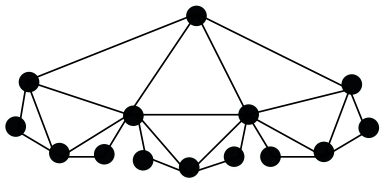}
\caption{The Parasol graph $\mathrm{P}_{14}$ on $n=14$ vertices, a special generalized sun graph}
\label{parasol}
\centering
\end{figure}

The parasol graphs are examples of generalized sun graphs.
A key observation is the following proposition.

\begin{proposition} Generalized sun graphs  do not admit a pair of disjoint total dominating sets.
\end{proposition}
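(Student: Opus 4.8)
The plan is to prove the equivalent statement that a generalized sun graph $G$ admits no coupon-coloring with two colors, since a coupon-coloring with colors $\{0,1\}$ is exactly a partition of $V(G)$ into two disjoint total dominating sets. Fix the Hamiltonian cycle bounding the unbounded face and index its vertices $v_0,\dots,v_{n-1}$ cyclically, with $n\equiv 2\pmod 4$. By the remark following Definition \ref{almost} every second vertex of this cycle is special (central or of degree $2$), and a generalized sun has exactly $n/2$ such vertices; so I may assume the special vertices are precisely $v_0,v_2,\dots,v_{n-2}$ while the remaining (``free'') vertices $v_1,v_3,\dots,v_{n-1}$ sit at the odd positions. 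The only constraint contributed by a degree-$2$ special $v_{2i}$ is the one I will exploit throughout: its neighborhood is $\{v_{2i-1},v_{2i+1}\}$, so any coupon-coloring $\chi$ must satisfy $\chi(v_{2i-1})\neq\chi(v_{2i+1})$.

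The engine of the proof is the interaction of this forced alternation with the mod-$4$ structure of a central vertex. Assume for contradiction that $\chi$ is a coupon-coloring and first treat the case where $G$ has a single central vertex $v$. Re-indexing so that $v=v_0$, Definition \ref{central} places all neighbors of $v$ at positions $\equiv 1\pmod 4$; write them in cyclic order as $a_0=v_1,a_1,\dots,a_t=v_{n-1}$, at positions $p_0<\dots<p_t$. Two consecutive neighbors $a_j,a_{j+1}$ bound a disc whose boundary arc $v_{p_j},\dots,v_{p_{j+1}}$ contains only degree-$2$ specials (this is exactly where the single-central assumption enters, since the unique central vertex is $v_0$ itself). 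As $p_{j+1}-p_j\equiv 0\pmod 4$, precisely $(p_{j+1}-p_j)/2$ of these specials have degree $2$, an even number, and each forces a color flip between the two consecutive free vertices surrounding it along the arc; hence $\chi(a_j)=\chi(a_{j+1})$. Chaining over $j$ shows that all of $a_0,\dots,a_t$ receive a single color, so the neighborhood of $v$ is monochromatic, contradicting the coupon condition at $v$.

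To handle several central vertices I would induct on their number, the base case (no central vertex) being a sun graph, for which Goddard and Henning's result quoted after Theorem \ref{GH} already gives the conclusion. If $G$ has a central vertex $v$ together with a further central vertex lying in one of the discs $D_j$ above, then by the structural fact recorded after Definition \ref{almost} the disc $D_j$ together with $v$ and the two edges $va_j,va_{j+1}$ forms a strictly smaller generalized sun $G_j$ (of order $(p_{j+1}-p_j)+2\equiv 2\pmod 4$), containing fewer central vertices than $G$ because $v$ has degree $2$ in $G_j$ and so is not central there; the induction hypothesis then applies to $G_j$.

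The main obstacle is precisely the descent of the coupon condition to the smaller sun $G_j$: a coupon-coloring of $G$ need not restrict to one of $G_j$. The favorable point is that the cut consists of a pair of chords meeting at $v$, so every vertex of $G_j$ other than the three cut vertices $v,a_j,a_{j+1}$ retains its full $G$-neighborhood and stays correctly covered; moreover $a_j,a_{j+1}$ lie at odd positions and $v$ has degree $2$ in $G_j$, so the coverage that can fail is confined to these few boundary vertices. I therefore expect the real work to be a bookkeeping argument at $v,a_j,a_{j+1}$, either by strengthening the induction hypothesis to track the colors of the chord-tops of a central vertex (so that the monochromaticity produced in the single-central case is propagated inward), or by choosing the decomposition so that the lost neighbors are chromatically redundant. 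Once the coupon condition is shown to survive on some $G_j$, the induction hypothesis delivers the contradiction and the proposition follows.
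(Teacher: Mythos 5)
Your reduction to coupon-colorings and your parity computation in the single-central-vertex case are both correct (and the latter is a nice self-contained argument), but the proof has a genuine, self-acknowledged gap: the induction step is never closed. You cut out the disc $G_j$ along the two chords $va_j$, $va_{j+1}$, observe that a coupon-coloring of $G$ need not restrict to a coupon-coloring of $G_j$, and then defer the repair to an unspecified ``bookkeeping argument'' or an unspecified strengthening of the induction hypothesis. Since transporting the coloring to the smaller graph is the entire content of the inductive step, this cannot be left open; as written you have only handled generalized sun graphs with at most one central vertex. There is also a structural mismatch in your descent: you insist on choosing a disc $D_j$ that \emph{contains another central vertex} (to decrease your induction parameter), but for that particular disc you have no control over the colors of $a_j$ and $a_{j+1}$, which is exactly the information you would need at the new degree-$2$ vertex $v$ of $G_j$.

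The paper closes the gap with two concrete moves. First, it proves the stronger statement that in \emph{any} $2$-coloring of a generalized sun graph some degree-$2$ or central vertex has a monochromatic neighborhood; under this strengthening the cut vertices $a_j,a_{j+1}$ are harmless, because they occupy non-special positions of the smaller piece, and only the apex $v$ needs attention. Second, it selects the cut \emph{chromatically} rather than structurally: by a pigeonhole count on the $2k+1$ free positions, some central vertex has two consecutive cycle-neighbors of the same color, hence (by the coupon condition at that vertex) a chord to a neighbor of the \emph{opposite} color; cutting along that single chord produces a smaller generalized sun in which the central vertex already sees both colors and every other special vertex keeps its entire $G$-neighborhood, so minimality gives the contradiction. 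Your idea of ``tracking the colors of the chord-tops'' points in this direction --- indeed, choosing $j$ so that $\chi(a_j)\neq\chi(a_{j+1})$ (such a $j$ exists unless $N(v)$ is monochromatic, which is itself a contradiction) would let your two-chord cut go through under the strengthened statement, with induction on the order rather than on the number of central vertices --- but as submitted the argument is a plan, not a proof.
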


\begin{proof}  The proof is indirect. We show a stronger statement, namely that one cannot color the vertices of generalized sun graphs in such a way that every degree $2$ vertex and central vertex have neighbors from both color class. Take a minimal counterexample $G$ of $4k+2$ vertices with a coupon-coloring with $2$ colors.  Since the unique generalized sun graph for $k=1$ clearly  does not have a coupon-coloring with $2$ colors, $k>1$ must hold.\\
 Index the vertices along the Hamiltonian cycle from $v_1$ to $v_{4k+2}$. By definition,  we can assume that every vertex of odd index is either a vertex of degree $2$ or a central vertex. The cardinality of the vertices implies that there must be two consecutive vertices $v_{2i}$ and $v_{2i+2}$ of even index along the Hamiltonian cycle which have the same color (say white). This would yield a contradiction if $v_{2i+1}$ was a vertex of degree $2$, so it must be a central vertex. Since the coloring was a coupon-coloring, we have an edge between a black vertex $v_{2j}$ and our central vertex $v_{2i+1}$. This edge cuts $G$ into two generalized sun graphs $G'$ (induced by vertices from $v_{2i+1}$ to  $v_{2j}$) and $G''$ (induced by  vertices from $v_{2j}$ to  $v_{2i+1}$) of order less than that of $G$.\\
     However, $G'$ cannot be coupon-colored properly with $2$ colors by the minimality property of $G$, that is, in every $2$-coloring, a vertex of odd index has neighbors from only one color class. Thus this holds for the induced coloring of $G$ on $G'$ as well. But the vertex violating the condition cannot be $v_{2i+1}$ since it has black and white neighbors. On the other hand, the other vertices of odd index in $G'$ has exactly the same neighborhood as in $G$, a contradiction.
\end{proof}

Our contribution is the following characterization theorem.

\begin{theorem}\label{GH_new}
If $G$ is a maximal outerplanar graph of order $n\geq 4$, then either $G$ is a generalized sun graph, or $G$ admits two disjoint total dominating sets.
\end{theorem}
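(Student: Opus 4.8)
The plan is to prove the implication that remains after combining with what has already been established. The Proposition shows that a generalized sun graph admits no coupon-coloring with two colors, and Theorem~\ref{GH} settles every order $n\not\equiv 2\pmod 4$ (in which range no graph is a generalized sun graph at all, since those have order $\equiv 2\pmod 4$); so the only case left is to produce, for each maximal outerplanar $G$ with $n\equiv 2\pmod 4$ that is \emph{not} a generalized sun graph, an explicit coupon-coloring with two colors. Thus I would first reduce to exactly this case.

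The starting point is a structural dichotomy. Call a vertex \emph{special} if it has degree $2$ or is a central vertex. Two degree-$2$ vertices are never consecutive along the Hamiltonian cycle (the triangle on their common edge would force a chord raising the degree of one of them to at least $3$), a central vertex has no degree-$2$ vertex in its closed neighbourhood by definition, and two central vertices cannot be adjacent on the cycle either (indexing from one of them, the shared edge violates the mod-$4$ congruence required of its neighbours). Hence the special vertices form an independent set on the cycle. For a generalized sun graph their number is exactly $n/2$ and they occupy every other position; if $G$ is \emph{not} a generalized sun graph then their number is at most $n/2-1$, and since they are independent on a cycle of even length $n$, a pigeonhole argument over the gaps forces two \emph{consecutive} non-special vertices $v_i,v_{i+1}$, each of degree at least $3$ and non-central. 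This pair is the source of all the slack I will exploit.

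Next I would set up a splitting argument along a chord. By Observation~\ref{eszrevetel} a chord corresponds to an edge of the weak dual tree $T$ (on $n-2\equiv 0\pmod 4$ vertices, maximal degree $3$) and cuts $G$ into maximal outerplanar graphs $G_1,G_2$ of orders $|T_1|+2$ and $|T_2|+2$, summing to $n+2$. The goal is a chord for which both pieces have order at least $4$ and $\not\equiv 2\pmod 4$; then each is coupon-colorable \emph{directly} by Theorem~\ref{GH} (in particular neither is a generalized sun graph), and no induction is needed for the mere existence of the colorings. Because the two orders sum to $n+2\equiv 0\pmod 4$, a single component of order $\not\equiv 2\pmod 4$ forces the other as well, so it suffices to exhibit a weak-dual edge whose smaller side has at least $2$ vertices and size $\not\equiv 0\pmod 4$. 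I would locate such an edge in the vicinity of the pair $v_i,v_{i+1}$, using that both of these vertices are incident to chords (they have degree at least $3$) that split off pieces of controllable size, and ruling out the degenerate leaf chord that would cut off a $K_3$.

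The genuine difficulty is the gluing. Merging colorings $\chi_1,\chi_2$ of $G_1,G_2$ into a coupon-coloring of $G$ requires only that they agree on the two shared chord-endpoints $v_a,v_b$: every other vertex retains its entire neighbourhood inside one piece, while $v_a$ and $v_b$ remain satisfied because they are already satisfied in each piece. Since a coupon-coloring may be globally complemented, $\chi_1$ offers the colour pattern on $\{v_a,v_b\}$ together with its complement, and likewise $\chi_2$; as complementation preserves whether $v_a,v_b$ receive equal or different colours, the two colorings can be reconciled exactly when they assign the \emph{same} ``equal/different'' status to the shared edge. Thus the crux — and the step I expect to be the main obstacle — is to prescribe this status, i.e.\ to strengthen Theorem~\ref{GH} so that at least one of the two pieces admits a coupon-coloring realising a chosen relation (equal, or different) on the boundary edge $v_a v_b$. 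I would prove such a boundary-controlled version by induction on the order, and it is precisely here that the freedom coming from the consecutive non-special vertices $v_i,v_{i+1}$ — equivalently, the failure of the every-other special pattern defining a generalized sun graph — should let one steer the colour of a prescribed boundary edge without being trapped in a forced, sun-like configuration. Checking that the small pieces, down to the order-$4$ graph, never obstruct this control would then complete the argument.
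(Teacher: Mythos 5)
Your reduction to the case $n\equiv 2\pmod 4$, your observation that a minimal weak-dual edge with both sides of size at least $2$ has smaller side of size $2$ or $3$ (so both pieces have order in $\{4,5\}\cup\{4k-1,4k\}$, hence $\not\equiv 2\pmod 4$ and coupon-colorable by Theorem~\ref{GH}), and your analysis of when two colorings can be merged across a chord (agreement up to global complementation, i.e.\ matching ``equal/different'' status on the chord endpoints) are all correct, and they closely parallel the paper's Lemma~\ref{3or4}. The problem is that the entire difficulty of the theorem is concentrated in the step you defer: the ``boundary-controlled'' strengthening of Theorem~\ref{GH} that would let you prescribe the equal/different status on a chosen boundary edge. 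You do not prove it, and as stated it is false: in $G_5$ the two degree-$2$ vertices force the two degree-$3$ vertices to receive \emph{equal} colors in every coupon-coloring, while in a sun graph $M(H)$ the pendant degree-$2$ vertices force \emph{different} colors across every edge of $H$; so neither status can be ``chosen'' in general, and one cannot always dump the obligation onto the other piece either (the case where the distance-$4$ chord is forced to be monochromatic and the order-$(4k-1)$ complement must accommodate this is precisely the hard case). Your suggestion that the consecutive pair of non-special vertices ``should let one steer the colour'' is exactly the point that needs an argument, and none is given.

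The paper resolves this by an induction tailored to that obstruction: after cutting off $G_5$ along a distance-$4$ chord $xy$ (when no distance-$3$ chord exists, which is the only genuinely hard situation), it considers the triangle $xyz$ on the other side, chooses the chord so that the smaller remaining piece $G'$ has order at most $5$ (Lemma~\ref{5+?}), and in each of the four resulting cases either builds the coloring explicitly or replaces $G$ by the smaller graph $G'||G''(zx,zy)$, which is $2$-coupon-colorable if and only if $G$ is \emph{and} is a generalized sun graph if and only if $G$ is — so the induction simultaneously tracks colorability and the generalized-sun structure. Something of this kind (an inductive hypothesis that records which boundary relations are achievable, with the generalized sun graphs as the exceptional class) is unavoidable; without it your proposal does not close.
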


Before we prove this theorem, we introduce some notations and add some useful observations concerning maximal outerplanar graphs.

\begin{definition} Let $G'||G''(x'y', x''y'')$  denote the graph obtained from $G'$ and $G''$ by identifying their edges $x'y'$ and $x''y''$, respectively.
\end{definition}

\begin{lemma}\label{3or4}\cite{GH16} There always exists a chord of length $3$ or $4$ in any maximal outerplanar graph $G$ of order $n\geq 6$.
\end{lemma}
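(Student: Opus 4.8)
The plan is to pass to the weak dual and reformulate the statement as one purely about subtree sizes. By Observation \ref{eszrevetel}, the weak dual $T$ of $G$ is a tree on $n-2\ge 4$ vertices with maximum degree at most three, and each chord of $G$ corresponds to an edge of $T$; if that edge separates $T$ into two subtrees of sizes $|T_1|$ and $|T_2|$, then the chord has length $|T_1|+1$ (equivalently $|T_2|+1$). Hence a chord of length $3$ or $4$ is exactly an edge of $T$ one of whose sides is a subtree of size $2$ or $3$. So it suffices to prove the combinatorial fact: every tree on at least four vertices with maximum degree at most three has an edge separating off a component of size exactly $2$ or exactly $3$.

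To locate such an edge I would use a deepest-heavy-subtree argument. Root $T$ at a leaf $r$, and for a vertex $v$ let $s(v)$ denote the number of vertices in the subtree hanging below $v$. Since $|T|\ge 4$, the unique child $c$ of $r$ satisfies $s(c)=|T|-1\ge 3\ge 2$, so the set of non-root vertices with $s(v)\ge 2$ is nonempty; among them choose a vertex $v$ of maximum depth. By maximality every child of $v$ has subtree size $1$, i.e. is a leaf, for otherwise a child $u$ with $s(u)\ge 2$ would lie strictly deeper and contradict the choice of $v$. Because $v\ne r$ it spends one of its incident edges on its parent, so the degree bound leaves at most two edges to children; thus $v$ has one or two leaf-children, giving $s(v)\in\{2,3\}$. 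The edge joining $v$ to its parent then separates off a subtree of exactly $s(v)$ vertices, which by the translation above yields a chord of $G$ of length $3$ or $4$.

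The only genuinely delicate points here are bookkeeping rather than conceptual. First, I must ensure the selected vertex exists and is distinct from the root; rooting at a leaf together with $|T|\ge 4$ settles this cleanly. Second, and this is the crux, the maximum-degree-three hypothesis is precisely what forces $s(v)\le 3$: without it a deepest heavy vertex could carry arbitrarily many leaf-children and have subtree size larger than $3$. I therefore expect the step where the parent edge plus the degree-three bound pins the number of children to at most two — and hence pins $s(v)$ to $\{2,3\}$ — to be the one place that actually exploits the special structure of maximal outerplanar graphs through Observation \ref{eszrevetel}, and that is where I would be most careful.
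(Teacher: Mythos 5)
Your proof is correct and takes essentially the same approach as the paper: both pass to the weak dual via Observation \ref{eszrevetel} and locate a vertex of the tree all of whose non-parent neighbors are leaves, so that the degree-three bound caps the separated subtree at size $2$ or $3$. The paper phrases this by deleting the leaves of $T$ and picking a leaf of the pruned tree (treating $n=6$ separately), whereas your deepest-heavy-vertex formulation absorbs that base case; the difference is only bookkeeping.
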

For sake of completeness we add a short proof.

\begin{proof} For $n=6$, the statement is straightforward. Otherwise consider the weak dual $T$ of $G$, and delete its leaves. The remaining graph is a tree $T'$ of order at least two.  Take a chord corresponding to one of the leaves in $T'$. Since this leaf in the remaining tree was attached to $1$ or $2$ leaves before the deletion, the chord is of length $3$ or $4$ via Observation \ref{eszrevetel}.
\end{proof}

\begin{lemma}\label{5+?} In any maximal outerplanar graph $G$ of order $n\geq 5$, there  exists a bounded face such that the deletion of the corresponding vertex in the weak dual graph of $G$ creates at most $1$ tree of order greater than $3$ and at least one of order $2$ or $3$.   
\end{lemma}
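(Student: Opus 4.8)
The plan is to work entirely in the weak dual tree $T$ of $G$, which by Observation~\ref{eszrevetel} has $m := n-2 \ge 3$ vertices and maximum degree at most three. Since deleting a vertex of degree $d$ from a tree produces exactly $d$ components, the statement to be proved is the following purely tree-theoretic assertion: there is a vertex $w$ of $T$ such that, among the components of $T-w$, at most one has order at least $4$ and at least one has order $2$ or $3$. I would first dispose of the two smallest cases $m=3$ and $m=4$ (that is, $n=5,6$) by hand: in each case $T$ is a path or a star, and deleting a leaf leaves a single component of order $m-1\in\{2,3\}$, which trivially satisfies both requirements, as there is then no component of order $\ge 4$.

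For $m\ge 5$ I would use a minimality argument. Call a component of $T-v$ obtained when a single vertex $v$ is removed a \emph{hanging subtree}; equivalently, it is the side containing $w$ when the edge $vw$ is deleted. Deleting any leaf of $T$ already exhibits a hanging subtree of order $m-1\ge 4$, so the family of hanging subtrees of order at least $4$ is nonempty. Choose such a subtree $S$ of minimum order, say $S$ hangs from $v$ through its neighbour $w\in S$, and root $S$ at $w$. Because $w$ has degree at most three in $T$ and one of its edges goes up to $v$, inside $S$ the vertex $w$ has at most two children, and each child subtree is a hanging subtree strictly smaller than $S$; by minimality every such child subtree has order at most $3$.

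Now I would delete $w$ itself and inspect the components of $T-w$. These are exactly the (at most two) child subtrees of $w$ together with the ``upward'' component containing $v$. Since every child subtree has order at most $3$, the only component that can have order $\ge 4$ is the upward one, which settles the first requirement. For the second requirement I would argue by simple bookkeeping: writing $|S| = 1 + \sum(\text{child orders}) \ge 4$ with at most two summands, each between $1$ and $3$, forces at least one child subtree to have order $2$ or $3$ (if there is a single child it must have order exactly $3$, and if there are two they cannot both have order $1$). That child subtree is a component of $T-w$ of order $2$ or $3$, completing the argument.

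The main obstacle is the second requirement --- guaranteeing a component of the ``medium'' order $2$ or $3$ rather than merely small or large pieces --- and the crucial leverage is the interplay between the degree bound (at most three, hence at most two children of $w$ inside $S$) and the minimality of $S$ (forcing each child to have order at most $3$); together these pin the child orders into exactly the window that makes the arithmetic work. A secondary point to watch is that the small cases $m\in\{3,4\}$ genuinely fall outside the minimality argument, since there a proper hanging subtree of order $\ge 4$ cannot exist, so they must be checked separately as above.
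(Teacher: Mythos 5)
Your proof is correct, but it reaches the desired face by a genuinely different route than the paper. The paper prunes the weak dual $T$ twice --- delete all leaves of $T$, then all leaves of the resulting tree $T'$ --- and takes the face corresponding to a leaf $u$ of the doubly-pruned tree $T''$; the components of $T-u$ other than the one surviving in $T''$ are then each a once-pruned leaf together with at most two original leaves, hence of order at most $3$, with at least one of order $2$ or $3$. You instead make an extremal choice: among all ``hanging subtrees'' of order at least $4$ (nonempty because deleting a leaf exhibits one of order $m-1\ge 4$), take a minimal one $S$ and delete its root $w$. The degree bound gives $w$ at most two children, minimality caps each child subtree at order $3$, and the inequality $1+\sum(\text{child orders})=|S|\ge 4$ forces some child subtree into the window $\{2,3\}$. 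Your version buys a cleaner justification of the ``at least one component of order $2$ or $3$'' clause, which the paper's proof leaves somewhat implicit, and it needs only the base cases $m\in\{3,4\}$ rather than all of $n\le 7$; the paper's pruning argument, on the other hand, is more constructive in that it points directly at which face to take. Both proofs hinge on the same leverage, namely that the maximum degree of the weak dual is three, so each is a legitimate proof of the lemma.
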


\begin{proof} For $n\leq 7$, the statement is easy to verify. Otherwise consider the weak dual $T$ of $G$, label every leaf by $L$, delete them, then label the leaves in the remaining tree $T'$ by $L'$, and delete them as well. In the tree $T''$ we get this way, choose any leaf, and denote it by $u$. (It is easy to see that $T''$ is not empty.)  We prove that the face corresponding to $u$ in the outerplanar graph is a suitable one. Indeed, we deleted at least $1$, at most $2$ neighboring vertices from $u$, and each of these neighbors has at most $2$ neighboring vertices with label $L$.  
\end{proof}

Now we are ready to prove Theorem \ref{GH_new}. 

\begin{proof}[Proof of Theorem \ref{GH_new}]
Notice first that maximal outerplanar graphs contain a Hamiltonian cycle, and if $n\equiv 0 \pmod 4$, then alternating colors in pairs along the Hamiltonian cycle does the job.\\ Also, a suitable coloring for the order  $n\equiv 1, 3 \pmod 4$ can be reduced to the previous case as it was shown in the proof of Theorem \ref{GH}, by Goddard and Henning \cite{GH16}.

Suppose now that $n=4k+2$. We will use induction on $k$ and show that if a maximal outerplanar graph $G$ of order $4k+2$ does not have a coupon-coloring with $2$ colors, then it must be a generalized sun graph.

The case $k=1$ is easy to check. \\
In general, if there exists a chord $xy$ of distance $3$, then there exists a suitable $2$-coloring. Indeed, this means that the chord $e=xy$ divides the graph $G$ to a pair of maximal outerplanar graphs of order $4$ and $4k$, which can both  be colored properly with the condition that $x$ and also $y$ have the same color in both subgraphs.\\
 If there is no chord defining distance $3$, then we can take a chord $e=xy$ of distance $4$ according to Lemma \ref{3or4}. This chords cuts down a maximal outerplanar graph $G_5$ of order $5$. Notice that $x$ and $y$ must be  degree $3$ vertices in $G_5$, otherwise we could find a chord of distance $3$ as well. Moreover, in order for the vertices of degree $2$ in $G_5$ to have a
 proper-colored neighborhood, vertices $x$ and $y$ must have the same color.\\ 
Consider the triangle face adjacent to $e$ which is not contained in $G_5$, and denote it by $xyz$. The deletion of this face of in $G$ yields three maximal outerplanar graph $G'$, $G''$ and the graph $G_5$ we studied. Note that one of $G'$ and $G''$ may be degenerate and have only $1$ edge, but the sum of their order is $4k$. Without loss of generality we may assume that $|G'|\leq |G''|$.

Choose the chord of length $4$ which have the following property:  $|G'|$ is minimal w.r.t. all choice. We claim that we can assume $|G'|\leq 5$ to hold. 
Indeed, this in turn follows from Lemma \ref{5+?}, as the Lemma guarantees a subtree of size $2$ or $3$ and a subtree of size at most $3$ in the weak dual, which are corresponding to a maximal planar graphs of order  $4$ or $5$ and one of order at most $5$ according to Observation \ref{eszrevetel}. 

Suppose  that $xz$ is the edge of $G'$. We end up with the following four cases depending on the order of $G'$.

\begin{figure}[h]
\centering
\includegraphics[height=4cm]{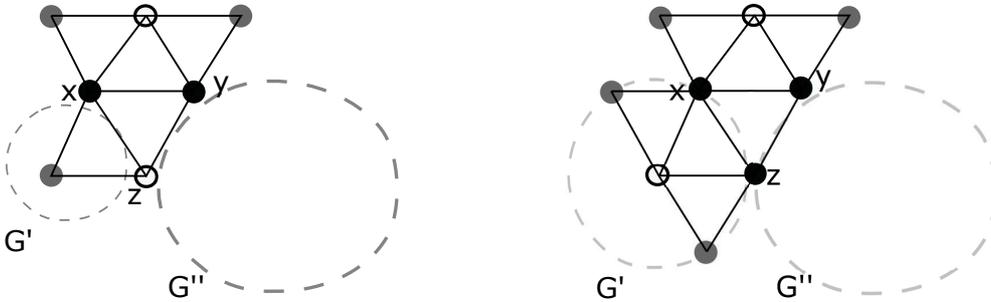}
\caption{Case 2. and Case 4. Gray indicates undefined color}
\label{proof}
\end{figure}

\textbf{Case 1}. $|G'|=2$, the degenerate case. Then $G''$ has order $4k-2$. If $G''$ can be coupon-colored, that means $z$ and $y$ both have neighbors from each color, which makes it easy to extend the coupon-coloring to $G$. Otherwise we can use induction, and get that $G''$ is a generalized sun graph. Hence  $y$ or $z$  is either  a degree $2$ vertex or a central vertex in $G''$. Denote this vertex by  $w_1$ and index the vertices  of $G''$ cyclically from $w_1$ along its outer face.

If $y=w_1$, then we color the vertices in pairs, alternating, along the Hamiltonian cycle of $G''$ starting from $w_2=z$, until $w_{4k-2}$ (white-white-black-black\ldots), and color $y=w_1$ black.
This way we obtain a coloring where only $w_1=y$ has monochromatic neighborhood in $G''$, but $G_5$ can be $2$-coupon colored  to provide $y$ the missing color.

 If $z=w_1$, then it is easy to see that $z$ will be a central vertex in $G$ and $G$ will be a generalized sun graph.

\textbf{Case 2}. $|G'|=3$. In any suitable $2$-coloring, $x$ and $z$ must have different colors, as they are neighbors of a degree $2$ vertex. On the other hand, $G_5$ implies that $x$ and $y$ must have the same color to provide both colors to the degree $2$ vertices. Consequently $G'||G''(zx,zy)$ is  $2$-coupon-colorable if and only if $G$ is  $2$-coupon-colorable. 
But on the other hand, $G'||G''(zx,zy)$ is  a generalized sun graph if and only if $G$ is   a generalized sun graph, hence the claim follows by induction.

\textbf{Case 3.}  $|G'|=4$. This case was excluded before as this implies $xz$ was a chord of distance $3$, which would yield a straightforward proper $2$-coloring.

\textbf{Case 4.} $|G'|=5$.  Here $xz$ must correspond to degree $3$ vertices in $|G'|$, otherwise we could find a chord of distance $3$. It thus follows that in any  $2$-coupon-coloring, $x$ and $z$ must have the same color, similarly to the pair $x$ and $y$. This implies that $G$  has a suitable $2$-coloring if and only if  $G'||G''(zx,zy)$ is $2$-coupon-colorable. But on the other hand, $G'||G''(zx,zy)$ is  a generalized sun graph if and only if $G$ is   a generalized sun graph, hence the claim follows by induction. 
\end{proof}

\section{Hamiltonian maximal planar graphs }
\label{sec:Maxplan}

A central conjecture of Goddard and Henning in \cite{GH16} is the following.

\begin{conjecture}\cite{GH16}\label{sejt} If $G$ is a maximal planar graph (that is, a triangulation) of order at least $4$, then its total domatic number is at least $2$.
\end{conjecture}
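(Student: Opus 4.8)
The plan is to first recast the statement hypergraph-theoretically and then attack it by a structural decomposition of triangulations. Since the two color classes we seek are exactly complementary total dominating sets, a coupon-coloring with two colors is precisely a $2$-coloring of $V(G)$ in which no neighborhood $N(v)$ is monochromatic; equivalently, the neighborhood hypergraph $\Gamma(G)$ has Property B. The key structural feature I would exploit is that in a triangulation of order $n\ge 4$ every vertex has degree at least $3$ and its neighborhood $N(v)$ induces a cycle, the \emph{link} of $v$. Thus the task reduces to $2$-coloring the vertices so that every link cycle receives both colors, i.e.\ is non-monochromatic. The conjecture asserts that, in contrast to the sun-graph obstructions of Theorem~\ref{GH_new} in the outerplanar world, no triangulation ever obstructs this.

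The structural dichotomy I would use is connectivity, together with the standard fact that a planar triangulation is $4$-connected if and only if it contains no separating triangle. By Tutte's theorem every $4$-connected planar triangulation is Hamiltonian, so on that subclass the statement follows from the coupon-coloring of Hamiltonian maximal planar graphs developed in this section. It remains to handle triangulations that are not $4$-connected, that is, those possessing a \emph{separating triangle} $\{a,b,c\}$. Such a triangle splits $G$ into two triangulations $G_1$ and $G_2$, each containing $abc$ as a face and each of order smaller than $n$ yet still at least $4$. I would then run an induction on $n$: color $G_1$ and $G_2$ by the inductive hypothesis and glue the colorings along the shared triangle, using that a global swap of the two colors inside one part preserves validity.

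The hard part will be the gluing. A global color swap inside $G_2$ gives only one bit of freedom, enough to align the color of a single vertex of $\{a,b,c\}$, whereas a consistent merge requires the \emph{entire} color pattern produced on $\{a,b,c\}$ by $G_1$ to be simultaneously realizable in $G_2$. There is no a priori reason why the patterns forced on the separating triangle by the two sides coincide, and a naive induction breaks here. The natural remedy is to strengthen the inductive hypothesis to a \emph{realizability} statement: for a triangulation with a distinguished face $abc$, determine exactly which colorings of $\{a,b,c\}$ extend to a valid coupon-coloring of the whole graph, and prove that the realizable patterns of any two triangulations sharing a face always overlap after a possible global swap. Establishing this pattern-flexibility lemma — in essence, that triangulations are never as rigid on a chosen triangle as generalized sun graphs are on their boundary — is the crux of the argument and precisely where the general case diverges from the comparatively rigid Hamiltonian subclass; it is, I expect, the principal obstacle and the reason the full conjecture is genuinely harder than its Hamiltonian specialization.

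Finally, I would keep in reserve an independent route through the Four Color Theorem. Write $V(G)=V_1\cup V_2\cup V_3\cup V_4$ as color classes and merge them into two super-colors by a pairing into two classes of two. Each link cycle is then properly $3$-colored by the three classes other than that of its center, so it can be monochromatic in the merged coloring only if it avoids the one class that forms the rest of its own super-color; the task becomes choosing the $4$-coloring and the pairing so that no link avoids its own half. Turning this purely local constraint into a simultaneous global guarantee is the obstacle for this approach, so I would treat it as a source of reductions and special-case arguments rather than a self-contained proof, and fall back on the separating-triangle induction above for the main line.
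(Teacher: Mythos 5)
You are attempting to prove Conjecture~\ref{sejt}, which the paper does \emph{not} prove: it is stated as an open conjecture of Goddard and Henning, and the paper establishes only the Hamiltonian special case (Theorem~\ref{maxplanar}). So your proposal has to stand on its own, and it does not. The sound part is the reduction of the $4$-connected case: a triangulation with no separating triangle is Hamiltonian (Whitney's theorem, cited in Section~\ref{sec:Final}), so Theorem~\ref{maxplanar} covers that subclass. But the separating-triangle induction has a genuine gap exactly where you locate it, and flagging the gap does not fill it. Your ``pattern-flexibility lemma'' --- that the sets of colorings of $\{a,b,c\}$ realizable by the two pieces $G_1$, $G_2$ always intersect after a global swap --- is a \emph{strengthening} of the conjecture, not a lemma en route to it. With two colors and the one-bit swap freedom there are four swap-classes of patterns on $\{a,b,c\}$, and already $K_4$ shows the realizable set is a proper subset of these: if $abc$ is a face of $K_4$ with fourth vertex $d$, a monochromatic $\{a,b,c\}$ makes $N(d)$ monochromatic, so that class never extends. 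Hence realizable sets are genuinely constrained, and nothing in your sketch rules out two pieces whose realizable sets are disjoint; proving they never are is precisely the open problem in disguise.

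A second, independent defect is that the strengthened induction has no base case. When the recursion bottoms out at a $4$-connected (Hamiltonian) piece, you need the realizability statement for that piece with respect to an arbitrary distinguished face, but Theorem~\ref{maxplanar} delivers only the \emph{existence} of one coupon-coloring, constructed as a rigid near-alternating pattern in blocks of two along the Hamiltonian cycle, anchored at a specific pair of vertices at distance $3$ that are degree-$2$ vertices of the two constituent generalized sun graphs. That construction gives no control whatsoever over the color pattern induced on a prescribed triangle, so the Hamiltonian case as proved in the paper cannot serve as the base of your stronger induction. The Four Color Theorem route is, by your own admission, not self-contained. What you have is a reasonable reduction scheme that concentrates the conjecture's full difficulty into one unproven flexibility statement --- a legitimate research plan, but not a proof of the statement.
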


They proved that the conjecture holds if every vertex in $G$ has odd degree. Based on our previous section, we can confirm the statement in the Hamiltonian case.

\begin{theorem}\label{maxplanar}  If $G$ is a Hamiltonian maximal planar graph of order at least $4$, then its total domatic number is at least $2$.
\end{theorem}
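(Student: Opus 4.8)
The plan is to realise a Hamiltonian maximal planar graph as the union of two maximal outerplanar graphs sharing a Hamiltonian cycle, and then to feed these into Theorem~\ref{GH_new}. Fix a Hamiltonian cycle $C=v_1v_2\cdots v_nv_1$ and a plane embedding of $G$. The curve $C$ bounds two closed discs, each triangulated by the chords lying inside, resp.\ outside, $C$; together with $C$ these chords define two maximal outerplanar graphs $G_1$ and $G_2$ on $V(G)$ with common boundary $C$, satisfying $E(G_1)\cup E(G_2)=E(G)$ and $E(G_1)\cap E(G_2)=E(C)$. As $G$ is a triangulation on $n\ge 4$ vertices it has $\delta(G)\ge 3$; since $\deg_G(v)=\deg_{G_1}(v)+\deg_{G_2}(v)-2$, no vertex can be a degree-$2$ vertex of both $G_1$ and $G_2$. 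The decisive monotonicity remark is that a coupon-coloring of a spanning subgraph is also one of the whole graph, because neighbourhoods only grow. Hence, by Theorem~\ref{GH_new}, if either $G_1$ or $G_2$ fails to be a generalized sun graph it has a coupon-coloring with two colours, which lifts to $G$ and finishes the proof. So I may assume that both $G_1$ and $G_2$ are generalized sun graphs; then $n\equiv 2\pmod 4$ and in each of them the special (degree-$2$ or central) vertices form exactly one parity class of $C$.

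Next consider the alternating coloring $c(v_i)\equiv i\pmod 2$. Each vertex sees the opposite colour already on $C$, so it is satisfied precisely when it has a chord to a vertex of its own parity. The key lemma I would establish is that, in a generalized sun graph, every non-special vertex has such a same-parity chord. Indeed, a non-special vertex $v_{2i}$ has a special cycle-neighbour, say $v_{2i+1}$; if $v_{2i+1}$ is a degree-$2$ vertex then its ear triangle provides the chord $v_{2i}v_{2i+2}$, while if $v_{2i+1}$ is central then the unique triangle on the boundary edge $v_{2i}v_{2i+1}$ has an apex $w$ adjacent to the central vertex, hence of parity opposite to $v_{2i+1}$, that is of the parity of $v_{2i}$; thus $v_{2i}w$ is a same-parity chord. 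Consequently, if the special classes of $G_1$ and $G_2$ have opposite parity, then every vertex is non-special in at least one of the two graphs and therefore has a same-parity chord there, so the alternating coloring is already a coupon-coloring of $G$ with two colours.

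It remains to rule out the configuration in which $G_1$ and $G_2$ carry their special vertices in the same parity class, and this is where I expect the real work to be. In that situation every special (say odd-indexed) vertex is special in both graphs, and since both ears and central vertices of a generalized sun graph send all their edges to the opposite parity class, the odd vertices form an independent set in $G$. To obtain a contradiction I would prove a bound on central vertices: writing $n=4k+2$, a generalized sun graph of order $4k+2$ has at most $k-1$ central vertices. Granting this, each of $G_1,G_2$ has at least $(2k+1)-(k-1)=k+2$ ears; but the ear sets of $G_1$ and $G_2$ are disjoint subsets of the $2k+1$ odd vertices (a common ear would have degree $2$ in $G$), and $2(k+2)>2k+1$ --- a contradiction. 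Hence this case does not occur and the alternating coloring always works. The delicate point is the central-vertex bound: I would prove it by induction using the quoted fact that an edge at a central vertex splits the graph into two generalized sun graphs with parameters $k'+k''=k$ and $k',k''\ge 1$, the main thing to verify being that this splitting distributes the central vertices correctly and respects the congruence condition modulo $4$ that defines centrality in the smaller graphs.
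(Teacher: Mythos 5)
Your proof is correct, and its skeleton coincides with the paper's: split $G$ along the Hamiltonian cycle into two maximal outerplanar graphs, use Theorem~\ref{GH_new} to reduce to the case where both are generalized sun graphs of order $4k+2$, and rule out the configuration where the two special classes occupy the same parity class by playing the disjointness of the two ear sets (forced by $\delta(G)\geq 3$) against the bound of at most $k-1$ central vertices per side --- that bound is exactly the paper's lemma on chords incident to central vertices together with its corollary that ears outnumber central vertices, and the splitting induction you sketch is the one the paper carries out. Where you genuinely diverge is the coloring in the surviving case. The paper does not use the alternating coloring: it first finds, by a pigeonhole argument on the shifted set $W=\{v_{j+3}: j\in J\}$, a pair of vertices at cycle-distance $3$ that are ears of $G_1$ and $G_2$ respectively, and then colors in blocks of two along the cycle ($i\equiv 1,2 \pmod 4$ white, else black, with an adjustment near the chosen pair), letting the two ear triangles supply the missing color to the two vertices whose cycle neighborhoods are monochromatic. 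You instead keep the plain parity coloring $c(v_i)\equiv i\pmod 2$ and prove a structural lemma the paper does not state: every non-special vertex of a generalized sun graph has a chord to a vertex of its own parity, obtained either from the ear triangle of an adjacent degree-$2$ vertex or from the apex over the edge to an adjacent central vertex, whose neighbors all have relative index $\equiv 1\pmod 4$ and hence opposite parity. Since in the surviving case every vertex is non-special on at least one side, the alternating coloring is immediately a coupon-coloring. Your route trades the paper's distance-$3$ pigeonhole and anchored block coloring for one extra lemma about generalized sun graphs plus a uniform coloring; both arguments are sound, and yours is arguably the more transparent of the two.
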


We first need the following Lemma and Corollary.

\begin{lemma} In every generalized sun graph of order $4k+2$, the number of chords incident to central vertices is at most $k-1$.
\end{lemma}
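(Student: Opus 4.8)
The plan is to induct on $k$ and to reduce to smaller generalized sun graphs by cutting along a single, carefully chosen chord at a central vertex. Write $N(G)$ for the number of chords of $G$ incident to central vertices. I would first record the local structure forced by Definition \ref{central}: every neighbour of a central vertex $v$ sits at a position congruent mod $4$ to the two cycle-neighbours of $v$, hence has even index while $v$ has odd index. Consequently central vertices have degree at least $3$, no two central vertices are adjacent, and a central vertex is never adjacent to a degree-$2$ vertex; in particular every chord counted by $N(G)$ has exactly one central endpoint. The base case $k=1$ is immediate, since the unique generalized sun graph of order $6$ has no central vertex and $N(G)=0=k-1$; likewise if $G$ has no central vertex at all, then $N(G)=0\le k-1$.

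For the inductive step I assume $G$ has a central vertex $v$ of degree $d\ge 3$. Indexing the Hamiltonian cycle from $v=v_0$, let $u_0,u_1,\dots,u_{d-1}$ be its neighbours in cyclic order ($u_0,u_{d-1}$ on the cycle), and let $e=vu_1$ be the \emph{outermost} chord at $v$. By the observation following Definition \ref{almost}, $e$ splits $G$ into two generalized sun graphs $G_1$ (containing $u_0,u_1$) and $G_2$ (containing $u_1,\dots,u_{d-1}$), of orders $4k_1+2$ and $4k_2+2$ with $k_1+k_2=k$ and $k_1,k_2\ge 1$ (the latter because $d\ge 3$ produces at least two fan sub-polygons). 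The whole argument now rests on controlling how the central vertices of $G$ sit inside $G_1$ and $G_2$, so that no chord counted by $N(G)$ other than $e$ is lost.

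Here the key point — and the step I expect to be the main obstacle — is a centrality-inheritance claim. For a central vertex $x\ne v$ lying strictly on one side, say in $G_1$, the chord $e$ separates nothing from $x$, so its degree and neighbourhood are unchanged; the only thing that could fail is the congruence-mod-$4$ condition, because re-indexing inside $G_1$ shifts the positions of the neighbours lying "behind" $x$. This is exactly where I use that both pieces have order $\equiv 2\pmod 4$: a neighbour at backward distance $\delta$ receives index $n-\delta$ in each graph, and since $n_G\equiv n_{G_1}\equiv 2\pmod 4$ these indices agree modulo $4$, so $x$ remains central in $G_1$. This is also the reason one must cut into two equal-residue pieces rather than peel off a fan sub-polygon of order $\equiv 1\pmod 4$, where the same computation would destroy the congruence. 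The vertex $v$ itself must be treated by hand: in $G_1$ it has degree $2$ and carries no chord, so it is harmless; in $G_2$ it has degree $d-1$, and the one thing to verify is that its cut-neighbour $u_1$ does not drop to degree $2$ — but the triangle $vu_1u_2$ supplies the edge $u_1u_2$, so $\deg_{G_2}(u_1)\ge 3$, and a short index computation shows that $u_1,\dots,u_{d-1}$ stay congruent mod $4$; hence $v$ is still central in $G_2$.

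Granting the inheritance claim, every chord incident to a central vertex of $G$ other than $e$ lies in $G_1$ or in $G_2$ and is incident to a central vertex of that piece, so the central chords inside $G_i$ are counted by $N(G_i)$. Therefore $N(G)\le 1+N(G_1)+N(G_2)\le 1+(k_1-1)+(k_2-1)=k-1$ by the induction hypothesis, which finishes the proof. The delicate part throughout is the choice of $e$ as the outermost chord: an arbitrary chord at $v$ could destroy the centrality of $v$ on the larger side — its cut-neighbour could become a degree-$2$ vertex — and then the chords at $v$ on that side would be counted by $N(G)$ but not by $N(G_i)$, breaking the inequality.
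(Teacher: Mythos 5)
Your proof is correct, and it shares the paper's basic strategy (induct on $k$ by cutting along a chord incident to a central vertex, using the fact that such a cut yields two smaller generalized sun graphs), but the recursion is organized differently. The paper selects the \emph{globally shortest} chord incident to any central vertex; minimality then forces the short side to contain no further central-incident chords, and induction is applied only to the long side, which has order at most $4(k-1)+2$, giving $1+(k-2)=k-1$. You instead fix one central vertex, cut along its outermost chord, and apply the induction hypothesis to \emph{both} pieces, getting $1+(k_1-1)+(k_2-1)=k-1$; this avoids the ``no further central chords on one side'' argument entirely, at the price of having to check that both pieces are nondegenerate (your $k_1,k_2\ge 1$ observation). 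Your version also makes explicit the centrality-inheritance step --- that a central vertex of $G$ lying strictly inside a piece remains central there because both orders are $\equiv 2\pmod 4$ --- which the paper's proof uses implicitly but never verifies; spelling this out is a genuine improvement. One small imprecision: when $\deg(v)=3$, the vertex $v$ has degree $2$ in $G_2$ and is therefore \emph{not} central there, contrary to your closing claim; this is harmless, since in that case $v$ carries no chord of $G_2$ and the inequality $N(G)\le 1+N(G_1)+N(G_2)$ is unaffected, but the case should be noted rather than folded into the assertion that $v$ stays central.
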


\begin{proof} For $k=1$, the unique generalized sun graph has no central vertex. In general, choose a chord incident to some central vertex, which has the smallest length. Suppose it is $v_iv_j$, with central vertex $v_i$. It divides the outerplanar graph into two generalized sun graphs, induced by the vertices $[v_i \ldots v_j]$ and $[v_j \ldots v_i]$ (written cyclically). The minimality condition implies that one of them cannot contain further diagonals incident to central vertices. However, the other part has cardinality at most $4(k-1)+2$ by the properties of central vertices, hence the statement follows by induction.
\end{proof}

\begin{remark} The Parasol graph of order $4k+2$ shows that the bound is sharp.
\end{remark}

\begin{corollary}\label{deg2} The number of degree $2$ vertices exceeds the number of central vertices in generalized sun graphs.
\end{corollary}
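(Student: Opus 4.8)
The plan is to combine the preceding Lemma with the parity structure of generalized sun graphs. Write $n=4k+2$, let $d$ denote the number of degree $2$ vertices and $c$ the number of central vertices. By Definition \ref{almost} we have $d+c=n/2=2k+1$, so the desired inequality $d>c$ is equivalent to the single bound $c\le k$; in fact I would aim for the stronger $c\le k-1$, which makes the strict inequality immediate.

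First I would record the structural facts already available. Every central or degree $2$ vertex lies in one fixed parity class $P$ of the cyclic indexing (the ``every second vertex'' remark following Definition \ref{almost}), and this class has exactly $2k+1=d+c$ elements, so $P$ consists \emph{precisely} of the central and degree $2$ vertices. A degree $2$ vertex has its two neighbours on the Hamiltonian cycle, which lie in the complementary class $P'$; and by Definition \ref{central}, re-indexing from a central vertex as $v_0$ shows that all of its neighbours have index congruent to that of its cycle-neighbours, hence also lie in $P'$. Consequently every vertex of $P$ has all of its neighbours in $P'$, so $P$ is independent and, crucially, every chord incident to a central vertex has its other endpoint in $P'$, i.e.\ at a \emph{non-central} vertex.

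The key step is then a counting argument. Let $m$ be the number of chords incident to central vertices. Since $P$ is independent, no chord joins two central vertices, so each chord counted by $m$ has exactly one central endpoint; summing the number of incident chords over central vertices therefore counts each such chord exactly once, giving $m=\sum_{v\ \text{central}}(\text{number of chords at }v)$. Each central vertex has degree at least $3$ --- its closed neighbourhood contains no degree $2$ vertex, in particular the vertex itself is not of degree $2$ --- and hence carries at least one chord. Thus $m\ge c$. Combining this with the preceding Lemma yields $c\le m\le k-1$, and therefore $d=2k+1-c\ge k+2>k-1\ge c$, so $d>c$.

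The main obstacle here is not the arithmetic but pinning down cleanly that no chord joins two central vertices; everything hinges on the claim that the central and degree $2$ vertices occupy a single parity class and that a central vertex's entire neighbourhood lies in the complementary class. I would therefore spend the care on deriving this parity statement rigorously from Definition \ref{central} and the observation following Definition \ref{almost}, since once central vertices are shown to be pairwise non-adjacent and each to contribute a distinct incident chord, the inequality $c\le m$ and the conclusion follow with no further work.
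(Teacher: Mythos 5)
Your proof is correct and follows the route the paper intends: the corollary is stated without proof as a consequence of the preceding Lemma, and you supply exactly the missing details (each central vertex carries at least one chord, no chord joins two central vertices because the central and degree $2$ vertices form an independent parity class, hence $c\le k-1$ while $d+c=2k+1$). Nothing further is needed.
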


\begin{proof}[Proof of Theorem \ref{maxplanar}] 
First notice that any Hamiltonian maximal planar graph can be a graph obtained from two maximal outerplanar graphs by identifying their Hamiltonian cycle. Hence essentially we only have to deal with those maximal planar graphs which are obtained from two generalized sun graphs, otherwise the theorem follows from Theorem \ref{GH_new}. 
	 
Assume that this is the case and the order of  the graph is $n=4k+2$.
  We can exclude the case when the union of the set of degree $2$ vertices  and central vertices coincide in the two outerplanar graphs. Indeed, Corollary \ref{deg2} would imply that some degree $2$ vertices also coincide in that case, while every degree must be at least $3$ in a maximal planar graph.
  
We claim that there exists a pair of vertices $v_i$ and $v_{i+3}$ having distance $3$ along the Hamiltonian cycle, such that both of them is a degree $2$ vertex in one of the maximal outerplanar graphs that form $G$. Consider the set of degree $2$ vertices $\{v_j: j\in J\}$  in the first generalized sun graph $G_1$ (for some index set $J$), and take the set consists of vertices $W:=\{v_{j+3}: j\in J\}$.  It also follows from Corollary \ref{deg2} that $W$ and the set of degree $2$ vertices  in the second generalized sun graph $G_2$ cannot be disjoint, proving our claim.

To end the proof, we define a suitable two-coloring of the vertices. Choose a  pair of vertices  having distance $3$ along the Hamiltonian cycle, we may suppose it is $v_1$ and $v_4$. Color a vertex $v_i$ white if $i \equiv 1, 2 \pmod 4$ and $i>4$, otherwise color it black. Then it is obvious that every vertex apart from $v_2$ and $v_3$ has both black and white neighbors along the Hamiltonian cycle. However, since $v_1$ and $v_4$ were degree $2$ vertices in $G_1$ and $G_2$ respectively,  the edges $v_{4k+2}v_2$ and $v_3v_5$  are contained in $G$, ensuring both colors in the neighborhood for $v_2$ and $v_3$ as well.
\end{proof}



\section{Concluding remarks and open questions}
\label{sec:Final}

Dorfling, Goddard, Hattingh and Henning studied the minimum number of edges needed to be added to  graphs of order $n$ with minimum degree at least two to obtain two disjoint total dominating sets  \cite{DGHH}. They proved that asymptotically $(n-2\sqrt{n})$ edges are needed at most in general to do so. 
Observe that Theorem \ref{GH_new}  implies a similar result concerning maximal outerplanar graphs (which also have minimal degree $2$).

\begin{corollary} We can obtain a graph having two disjoint total dominating set  from every maximal outerplanar graph by the addition of at most one edge.
\end{corollary}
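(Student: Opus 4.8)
The claim is that every maximal outerplanar graph $G$ of order $n \geq 4$ can be turned into a graph with two disjoint total dominating sets by adding at most one edge. Since the paper has just established Theorem \ref{GH_new}, the natural strategy is to split into the two cases it provides. If $G$ is \emph{not} a generalized sun graph, then $G$ already admits two disjoint total dominating sets, so zero edges suffice and we are done. The entire content of the corollary therefore lies in the remaining case: $G$ is a generalized sun graph, and I must exhibit a single edge whose addition destroys the obstruction.

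Let me think through the plan. My goal would be to add one chord to the generalized sun graph $G$ (of order $n = 4k+2$) so that the resulting graph $G^+$ becomes $2$-coupon-colorable. The cleanest route is to mimic the coloring used in the proof of Theorem \ref{maxplanar}: there, a pair of vertices at Hamiltonian distance $3$ was used to anchor an alternating pairwise coloring (white on indices $\equiv 1,2 \pmod 4$, black otherwise), and the two problematic vertices $v_2, v_3$ were rescued by edges already present in the second outerplanar half. Here I have only one outerplanar graph, so I would instead \emph{add} the single chord that plays the role those preexisting edges played — namely a chord supplying the missing color to the one vertex that the alternating coloring leaves monochromatic. Concretely, after recalling that in a generalized sun graph every vertex of odd index is a degree-$2$ vertex or a central vertex, I would start the pairwise alternating pattern at a convenient degree-$2$ vertex and observe that this coloring is a valid coupon-coloring except at a single vertex; adding one chord adjacent to that vertex, landing on a vertex of the opposite color, repairs it.

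The key steps, in order, are: (i) invoke Theorem \ref{GH_new} to reduce to the generalized sun graph case; (ii) fix the Hamiltonian indexing so that odd-indexed vertices are exactly the degree-$2$ and central vertices; (iii) apply the pairwise alternating coloring and verify, using the structural fact that even-indexed vertices have the full neighborhood structure inherited along the cycle, that at most one vertex fails to see both colors; and (iv) add exactly one chord incident to that failing vertex whose other endpoint carries the missing color, then check that the addition preserves outerplanarity is \emph{not} required — the corollary only asks for a graph with two disjoint total dominating sets, not a maximal outerplanar one, so I am free to add any edge.

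The main obstacle I anticipate is step (iii)–(iv): controlling precisely how many vertices the alternating coloring leaves monochromatic and ensuring a \emph{single} edge fixes all of them simultaneously. Because $n \equiv 2 \pmod 4$, the pairwise pattern cannot close up perfectly around the cycle, and a parity mismatch will generically create a localized defect of bounded size; the delicate point is to argue this defect is confined to one vertex (or to two adjacent vertices both curable by the same added chord), rather than spreading. I expect to handle this by choosing the starting vertex of the alternating pattern to be a degree-$2$ vertex guaranteed by Corollary \ref{deg2}, so that the inevitable seam falls at a controlled location, after which a direct verification that one chord suffices completes the argument.
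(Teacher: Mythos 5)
Your reduction via Theorem \ref{GH_new} to the case of a generalized sun graph is the right first move, but the core of your plan --- steps (iii)--(iv) --- has a genuine gap. On a Hamiltonian cycle of length $4k+2$, the pairwise alternating pattern necessarily produces one run of four consecutive vertices of the same color, and \emph{both} middle vertices of that run fail to see the opposite color among their cycle-neighbors; so the defect is two adjacent same-colored vertices, not one. A single added edge cannot supply the missing color to both of them (an edge from one of them to an opposite-colored vertex does nothing for the other), so your argument must show that one of the two is already rescued by an existing chord of $G$. This is exactly where the proposal breaks down: if you place the seam at a degree-$2$ vertex, that vertex has no chords at all, and if you place it at a central vertex, then by Definition \ref{central} all of its neighbors have congruent indices $\pmod 4$ and hence receive the \emph{same} color under any pairwise alternating coloring --- so the odd-indexed seam vertex fails unavoidably, and the entire burden falls on proving that the adjacent even-indexed seam vertex has a chord to an opposite-colored vertex (for some choice of phase). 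You give no argument for this structural fact about generalized sun graphs, and it is not obvious; without it the proof is incomplete.

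For contrast, the paper avoids colorings entirely: take a degree-$2$ vertex $v_i$ with cycle-neighbors $v_{i-1},v_{i+1}$ and their common neighbor $w\neq v_i$, flip the diagonal (delete $v_{i-1}v_{i+1}$, add $wv_i$) to obtain a maximal outerplanar graph that is no longer a generalized sun graph, apply Theorem \ref{GH_new} to get two disjoint total dominating sets there, and observe that total domination is preserved under adding back the edge $v_{i-1}v_{i+1}$, so the same two sets work in $G+wv_i$. If you want to salvage your coloring-based route, you would need to prove the missing lemma about chords at the seam; otherwise the flip argument is both shorter and gap-free.
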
 

\begin{proof} It is enough to check  the statement for generalized sun graphs in view of Theorem \ref{GH_new}. Consider a degree $2$ vertex $v_i$, and its neighbors $v_{i-1}$ and $v_{i+1}$ lying on the outer face. Since $G$ is maximal outerplanar, $v_{i-1}$ and $v_{i+1}$ are adjacent and they have exactly one common neighbor $w$ beside $v_i$. If we delete  $v_{i-1}v_{i+1}$ and add $wv_i$, the provided graph will be a maximal outerplanar graph, but not a generalized sun graph. Hence, the addition of edge $wv_i$ implies the existence of two disjoint total dominating sets via Theorem \ref{GH_new}. 
\end{proof}

Tthe proof of Theorem \ref{maxplanar} and  its ingredients suggest that under some weak assumptions the total domatic number of maximal planar graphs  exceeds $2$. This opens the way for the following question.

\begin{problem} Describe those maximal planar graphs that have total domatic number  $2$.
\end{problem} 

Hamiltonicity of maximal planar graphs may serve as an intermediate step towards Conjecture \ref{sejt}. Note that a well known result of Whitney  shows that every
maximal planar graph without separating triangles is Hamiltonian, where a \textit{separating triangle}
is a triangle whose removal disconnects the graph. This was even strengthened in the work of Helden who  proved that each maximal
planar graph with at most five separating triangles is Hamiltonian. \cite{Helden}.

\textbf{Acknowledgement}

This work started at the Workshop on Graph and Hypergraph Domination in Balatonalm\'adi in June 2017. The author is thankful to the organizers and Michael Henning for posing the problem, and to  Claire Pennarun, D\"{o}m\"{o}t\"{o}r P\'alv\"{o}lgyi, Bal\'azs Keszegh, Zolt\'an Bl\'azsik and D\'aniel Lenger for the discussion on the topic. Also, grateful acknowledgement is due to Claire Pennarun for his helpful suggestions in order to improve the presentation of the paper.

\end{document}